\documentclass[11pt,reqno]{amsart}

\usepackage[T1]{fontenc}
\usepackage{lmodern}
\usepackage{amsmath,amssymb,mathtools}
\usepackage{microtype}
\usepackage{xcolor}
\usepackage[colorlinks=true,linkcolor=blue,citecolor=blue,urlcolor=blue]{hyperref}
\usepackage{booktabs}
\usepackage{array}
\usepackage{tabularx}
\usepackage{caption}

\newtheorem{theorem}{Theorem}[section]
\newtheorem{proposition}[theorem]{Proposition}
\newtheorem{lemma}[theorem]{Lemma}

\newtheorem{problem}[theorem]{Problem}

\newcommand{\Z}{\mathbb{Z}}

\newcommand{\grammarrev}[1]{#1}

\title[Finite Sidon Sets via Vector-Valued Smoothing]{An Improved Upper Bound for Finite Sidon Sets via Vector-Valued Smoothing\footnote{Research supported by National Key R\&D Program of China (Grant No. 2023YFA1010202). }}
\author{Jianfeng Hou}
\address{Center for Discrete Mathematics, Fuzhou University,
Fujian 350108, China}
\email{jfhou@fzu.edu.cn}

\author{Hongbin Zhao}
\address{Center for Discrete Mathematics, Fuzhou University,
Fujian 350108, China}
\email{hbzhao2024@163.com}

\subjclass[2020]{Primary 11B83, Secondary 05B10}
\keywords{Sidon set, convolution, kernel, boundary}

\begin{document}

\begin{abstract}
Let $F(N)$ denote the largest cardinality of a Sidon subset of
$\{0,1,\ldots,N-1\}$. We prove
\[
F(N)\le N^{1/2}+\gamma_0N^{1/4}+O(1),
\qquad
\gamma_0=0.94349\ldots<0.9435.
\]
This improves the previously published coefficient $0.98183$. Our argument develops a vector-valued smoothing method that
combines several discrete smoothing kernels, each accompanied by boundary
weights that compensate for endpoint effects, so that their weighted
combination satisfies the required finite covering inequalities. We also show
that averaging systems that satisfy these inequalities individually cannot
improve upon the best constituent. Numerical optimization is used to find an
eight-component candidate system, which is then certified by exact rational
arithmetic.
\end{abstract}

\maketitle

\section{Introduction}\label{sec:introduction}

A finite set $A\subset\Z$ is \emph{Sidon} if every nonzero difference has at most one ordered representation, that is,
\[
a-b=c-d\ne0,\qquad a,b,c,d\in A,
\]
implies $(a,b)=(c,d)$. Write
\[
F(N):=\max\bigl\{|A|:A\subset\{0,1,\ldots,N-1\}\text{ is Sidon}\bigr\}.
\]

A natural problem is to understand the asymptotic behavior of $F(N)$ as $N\to\infty$. For the lower bound, Sidon first showed that $F(N)> cN^{1/2}$ \cite{Sidon1932} for some constant $c>0$, and
Erd\H{o}s and Tur\'an later improved the constant \cite{ErdosTuran1941}, proving
\[
    F(N)\geq
    \left(\frac{1}{\sqrt{2}}-o(1)\right)\sqrt{N}. 
\]
The finite-field constructions of Singer
\cite{Singer1938}  and Bose \cite{Bose1942}, together with Chowla's
transfer argument \cite{Chowla1944}, imply that
$F(N)\ge(1-o(1))\sqrt N$.  Using the prime-gap theorem of Baker,
Harman, and Pintz~\cite{BakerHarmanPintz2001}, one further obtains 
\[
F(N)\ge N^{1/2}-O\bigl(N^{21/80}\bigr).
\]

For the upper bound, observe that the positive differences determined by a Sidon set $A$ are pairwise distinct and all lie in
$\{1,\ldots,N-1\}$. \grammarrev{Consequently, $\binom{|A|}{2}\leq N-1$,
which gives $F(N)\leq \sqrt{2N}+O(1)$.} Erd\H{o}s and Tur\'an \cite{ErdosTuran1941} substantially improved
this elementary estimate by proving
\begin{equation}\label{Erdos-Turan-upper-bound-sidon}
    F(N)\leq N^{1/2}+N^{1/4}+O(1).
\end{equation}
They also posed the following problem, which remains open.
\begin{problem}[Erd\H{o}s--Tur\'an]\label{Erdos-Turan-problem}
Is it true that
\[
    F(N)\leq \sqrt{N}+O(1)
\]
for all positive integers $N$?
\end{problem}
Lindstr\"om \cite{Lindstrom} later recovered \eqref{Erdos-Turan-upper-bound-sidon} by a different argument.
Erd\H{o}s~\cite{Erdos1981,Erdos1982,Erdos1989} subsequently returned to this problem on several occasions.  He also offered \$500 for a proof or disproof of the following weaker version of Problem~\ref{Erdos-Turan-problem} \cite{Erdos1994}: for every $\varepsilon>0$,
\[
F(N)<N^{1/2}+o\bigl(N^\varepsilon\bigr)
\]
as $N\to\infty$.
The problem was later recorded in Guy's monograph \cite{Guy2004}.
Ben Green's collection of 100 open problems \cite{Green100Problems} includes as Problem~31 the question of improving the classical bounds for $F(N)$, at least for infinitely many $N$.

Since then, progress on the upper bound has mainly concerned the coefficient of the second-order term $N^{1/4}$.  Balogh, F\"uredi, and Roy \cite{BFR} reduced this coefficient to $0.998$, and O'Bryant \cite{OBryantSize} subsequently obtained $0.99703$.  Carter, Hunter, and O'Bryant \cite{CHO} further improved the bound to
\[
    F(N)\leq N^{1/2}+0.98183N^{1/4}+O(1).
\]
More recently, Tao reported unpublished joint work with Carter, Georgiev, G\'omez-Serrano, Hunter, O'Bryant, and Wagner that reduces the coefficient to $0.97633$ \cite{OptimizationConstants,ErdosForum30}. In the same forum comment, Tao mentioned a tentative value of approximately $0.947$, subject to confirmation~\cite{ErdosForum30}.

In this paper, we obtain the following further improvement.

\begin{theorem}\label{thm:main}
There is a constant $\gamma_0=0.94349\ldots<0.9435$ such that
\[
F(N)\le N^{1/2}+\gamma_0N^{1/4}+O(1).
\]
\end{theorem}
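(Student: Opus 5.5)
We follow the Erd\H{o}s--Tur\'an / Lindstr\"om counting framework in the form used by Balogh--F\"uredi--Roy and Carter--Hunter--O'Bryant, and replace the single smoothing kernel by a vector of kernels. Let $A\subset\{0,\dots,N-1\}$ be Sidon with $|A|=k$, and write $1_A$ for its indicator. For a nonnegative, compactly supported discrete kernel $f$ on $\{0,\dots,u-1\}$, the smoothed function $1_A*f$ has total mass $k\sum f$ on a support of length $N+u-1$. Cauchy--Schwarz gives
\[
k^2\Bigl(\sum f\Bigr)^2\le (N+u-1)\,\|1_A*f\|_2^2 .
\]
The Sidon property makes every nonzero difference occur at most once, so
\[
\|1_A*f\|_2^2\le k\|f\|_2^2+\sum_{d\ne0}(f*\tilde f)(d),
\]
where $\tilde f(x)=f(-x)$. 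Optimizing over the scale $u\asymp N^{3/4}$ produces $k\le N^{1/2}+c(f)N^{1/4}+O(1)$, with $c(f)$ an explicit functional of the normalized kernel shape. The classical box kernel gives $c=1$, and CHO's optimized single kernel gives $0.98183$.

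The first step is to generalize this in two ways. First, we add \emph{boundary weights}. The points of $A$ near $0$ and $N-1$ contribute less smoothed mass inside the window, and Cauchy--Schwarz is lossy there, so we attach weights $w$ near the endpoints that trade window length against this loss. The outcome is a family of finite covering inequalities: we need a certain quadratic form in $(f,w)$ to dominate a target linear functional. Second, we take \emph{several} kernels $f_1,\dots,f_m$ with nonnegative coefficients $\lambda_i$. We apply Cauchy--Schwarz only once, to the combined vector-valued quantity $\bigl(1_A*f_i\bigr)_i$ in a suitable weighted $\ell^2$ norm, instead of to each kernel separately. The cross terms $\sum_{d\neq 0}(f_i*\tilde f_j)(d)$ are again bounded using the Sidon property. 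This gives a bound $k\le N^{1/2}+\gamma(\lambda,f,w)N^{1/4}+O(1)$, in which $\gamma$ comes from a finite-dimensional quadratic optimization. It holds whenever the combined covering inequalities are satisfied, even if no single component satisfies them on its own. Note that averaging systems that are each individually valid gains nothing, because the bound is convex in each component. The improvement must therefore come from components that are jointly, but not individually, admissible.

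The second step is computational. We discretize the kernel shapes at a fixed resolution and take $m=8$ components. We then numerically minimize $\gamma$ subject to the covering inequalities, which form a nonconvex quadratic program solved locally, and obtain a candidate near $0.9435$. To make the result rigorous, we round the candidate to rationals and verify every covering inequality and the value of $\gamma$ in exact rational arithmetic. A small slack absorbs the rounding. The asymptotic constant $\gamma_0$ is the exact optimum of the certified finite problem, which evaluates to $0.94349\ldots$.

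The main obstacle is the first step: formulating the covering inequalities so that they are simultaneously (i) provably sufficient for the Sidon counting argument for every $N$, with all error terms uniformly $O(1)$ after choosing $u\sim tN^{3/4}$ with integer rounding, and (ii) a finite list, so that they can be certified exactly. In particular, the discrete endpoint effects at scale $u$ must be controlled uniformly so that discretization does not cost a term of order $N^{1/4}$. I would first handle the continuum limit, where the kernels are step functions at resolution $u/L$. I would then show that the discrete sums differ from their continuum counterparts by $O(1/u)$ relative error, which contributes only $O(1)$ to the final bound.
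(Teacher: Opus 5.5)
Your overall architecture is the paper's: several step-function kernels, a single Cauchy--Schwarz in a $\lambda$-weighted direct sum, covering inequalities required only jointly, and an exact rational certificate. The paper uses $R=8$ kernels, each with $m=32$ blocks, and $L=4$; your $m$ is its $R$.

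\textbf{The gap.} It lies in the step you yourself flag. You never write down the covering inequalities, and your plan for requirement (ii) does not produce a finite list. A continuum limit plus an $O(1/u)$ discretization estimate still leaves one condition for every real shift. The missing idea is to put the kernels and the boundary weights on a common grid:
\begin{itemize}
\item Take $H=mh$ with $h$ an integer, and set $K^{(r)}_s=p^{(r)}_i/h$ on the $i$th block of length $h$.
\item Let $Q_r$ equal $w^{(r)}_j$ on the $j$th block of length $h$ at the left end ($0\le j<Lm$), reflect this at the right end, and set $Q_r=1$ elsewhere on $J=\{0,\dots,N+H-2\}$.
\end{itemize}
For $x=qh+t<LH$ with $0\le t<h$, the covering sum is then exactly
\[
\Bigl(1-\frac th\Bigr)M(q)+\frac th\,M(q+1),\qquad M(q):=\sum_{r}\lambda_r\sum_{i=0}^{m-1}p^{(r)}_iw^{(r)}_{q+i},
\]
with $w^{(r)}_j=1$ for $j\ge Lm$. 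Equivalently, step functions on a common grid make the shift functional piecewise linear between grid points.

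So the $Lm+1$ inequalities $M(q)\ge1$ give covering at every $x\in A$, with no error term at all. The symmetry $p_i=p_{m-1-i}$ handles the right end, and in the interior the sum is exactly $1$. Summing over $A$ gives $k\le\sum_r\lambda_r\langle Q_r,u_r\rangle$. Cauchy--Schwarz in $\bigoplus_r\ell^2(J)$ then gives $k^2\le(N+bH-1)\bigl(1+a(k-1)/H\bigr)$. Taking $H\in m\Z$ near $\sqrt{a/b}\,N^{3/4}$ yields the coefficient $\sqrt{ab}$.

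In this norm no cross terms $f_i*\tilde f_j$ arise, since each $\|u_r\|_2^2\le1+(k-1)C_r(0)$ is bounded separately. What does matter, and your sketch leaves ambiguous, is that each kernel carries its own boundary vector. With one shared $w$, the system is no better than the single scalar kernel $\sum_r\lambda_rp^{(r)}$. That kernel satisfies the same covering inequalities with the same $b$ and, by convexity of squares, has no larger $a$.

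\textbf{Two further corrections.} First, $\gamma_0$ is not ``the exact optimum of the certified finite problem.'' The outer problem is nonconvex, the search is local, and the boundary entries are rounded and then shifted by a repair increment. What is certified is only that one explicit rational system is feasible with $b>0$ and $ab<(9435/10000)^2$. That is all the theorem needs, and $\gamma_0$ is simply $\sqrt{ab}$ at that point.

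Second, your remark that averaging individually feasible systems cannot help is true, but your justification points the wrong way. It follows from $(\sum_r\lambda_ra_r)(\sum_r\lambda_rb_r)\ge(\sum_r\lambda_r\sqrt{a_rb_r})^2$, that is, from \emph{concavity} of $(a,b)\mapsto\sqrt{ab}$. Convexity would make averaging beneficial.
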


The contribution is a method rather than merely a new numerical coefficient.
Theorem~\ref{thm:kernel-transfer} converts any feasible kernel--boundary system into a Sidon-set bound; the optimized constant is therefore the output of a reusable finite optimization problem. The genuinely vector-valued feature is that no individual kernel--boundary pair need satisfy the finite covering inequalities; only their weighted combination must do so.  Proposition~\ref{prop:scalar-bottleneck} shows, conversely, that merely averaging scalar-feasible kernel--boundary pairs cannot improve the best constituent coefficient.  Thus the possible gain comes from complementary boundary deficits, not from averaging constants.

There are three logically distinct layers.  The analytic layer proves the vector-valued smoothing theorem.  The exploratory layer carries out a nonconvex outer search numerically in order to find a promising kernel--boundary system.  The certification layer rounds that system to rational data and verifies every hypothesis of the analytic theorem by exact arithmetic.  Only the first and third layers enter the proof of Theorem~\ref{thm:main}; no assertion of global optimality for the numerical search is needed.

The paper is organized as follows. Section~\ref{sec:smoothing} establishes the vector-valued smoothing theorem. Section~\ref{sec:optimization} formulates the finite optimization problem and describes the numerical search and the rationalization procedure.
Section~\ref{sec:certificate} gives the exact eight-pair rational certificate and completes the proof.

\section{Vector-valued smoothing theorem}\label{sec:smoothing}

Fix integers $m,L,R\ge1$. For $1\le r\le R$, let $p^{(r)}=\bigl(p_0^{(r)},\ldots,p_{m-1}^{(r)}\bigr)$ be a symmetric probability vector, which we call a \emph{kernel vector}, that is, 
\[
p_i^{(r)}\ge0,\qquad
\sum_{i=0}^{m-1}p_i^{(r)}=1,\qquad
p_i^{(r)}=p_{m-1-i}^{(r)}.
\]
Let $\lambda_1,\ldots,\lambda_R\ge0$ be mixing weights with $\sum_{r=1}^R\lambda_r=1$. For every $r$, let $w^{(r)}=\bigl(w_0^{(r)},\ldots,w_{Lm-1}^{(r)}\bigr)$ be a boundary vector with real entries, and set $w_j^{(r)}=1$ for every $j\ge Lm$. We call \( \left(p^{(r)},w^{(r)}\right) \) a \emph{kernel--boundary pair}, 
and the collection \( \left(p^{(r)},w^{(r)}\right)_{r=1}^R \) a \emph{kernel--boundary system}. We define the \emph{finite covering inequalities}
\begin{equation}\label{eq:covering}
\sum_{r=1}^R\left(\lambda_r\sum_{i=0}^{m-1}p_i^{(r)}w_{q+i}^{(r)}\right)\ge1
\qquad \text{ for } \quad 0\le q\le Lm.
\end{equation}
Define
\begin{equation}\label{eq:a}
a:=m\sum_{r=1}^R\lambda_r\sum_{i=0}^{m-1}\left(p_i^{(r)}\right)^2,
\end{equation}
\begin{equation}\label{eq:b}
b:=1+2\left(\frac1m\sum_{r=1}^R\lambda_r
\sum_{j=0}^{Lm-1}\left(w_j^{(r)}\right)^2-L\right).
\end{equation}
Here $a$ records the \emph{kernel energy} and $b$ the \emph{boundary cost} of the system $ \left(p^{(r)},w^{(r)}\right)_{r=1}^R$. 

\begin{theorem}\label{thm:kernel-transfer}
If \eqref{eq:covering} holds and $b>0$, then for sufficiently large $N$, 
\[
F(N)\le N^{1/2}+\sqrt{ab}\,N^{1/4}+O(1).
\]
\end{theorem}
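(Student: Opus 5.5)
The plan is to adapt the Cauchy--Schwarz (``smoothed counting'') argument of Erd\H{o}s--Tur\'an and Lindstr\"om. We smooth the indicator $1_A$ by a step-function kernel built from each $p^{(r)}$. We weight the smoothed function by a step-function boundary profile built from $w^{(r)}$. Then we apply Cauchy--Schwarz jointly over $r$ and over positions, with the mixing weights $\lambda_r$ as a probability measure. Throughout, $A\subset\{0,\dots,N-1\}$ is a Sidon set. We may assume $|A|\le 2\sqrt N$, by the trivial bound $\binom{|A|}2\le N-1$.

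\emph{Construction.} Fix a block length $s\ge1$ and put $M=ms$. For each $r$ define $k_r(x)=p_i^{(r)}$ for $x\in[is,(i+1)s)$, $0\le i<m$, and $k_r(x)=0$ otherwise, so that $\sum_x k_r(x)=s$. Put $f_r=1_A*k_r$, which is supported on $[0,N+M)$. Define the weight $W_r(x)=w^{(r)}_{\lfloor x/s\rfloor}$ near the left end. Near the right end define it by reflection, $W_r(x)=W_r(N+M-1-x)$, adjusting $N$ by $O(s)$ so that $N+M$ is a multiple of $s$. Set $W_r(x)=1$ in between; we need $N\ge 2LM$ so that the two boundary zones are disjoint.

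\emph{Key steps.} First, for each $a\in A$ lying in block $q$,
\[
\sum_x W_r(x)\,k_r(x-a)\approx s\sum_i p_i^{(r)}w^{(r)}_{q+i}.
\]
When $a$ is a multiple of $s$ this is exact. A general offset is handled by splitting it into two adjacent integer shifts, or more simply by taking the weight constant on blocks aligned with $a \bmod s$ and averaging over the $s$ residues. Weighting by $\lambda_r$ and using \eqref{eq:covering} for $q\le Lm$ (with $q>Lm$ trivial since then all $w=1$) gives
\[
\sum_r\lambda_r\sum_x W_rf_r\ge s|A|.
\]
The right end is handled by the symmetry $p_i=p_{m-1-i}$ together with the reflection of $W_r$. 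Second, Cauchy--Schwarz gives
\[
s^2|A|^2\le\Bigl(\sum_r\lambda_r\sum_xW_r^2\Bigr)\Bigl(\sum_r\lambda_r\sum_xf_r^2\Bigr).
\]
Third, we compute each factor:
\[
\sum_xW_r^2=N+M+2s\sum_{j<Lm}\bigl((w_j^{(r)})^2-1\bigr),
\]
so the $\lambda$-average equals $N+bM$ exactly by \eqref{eq:b}. For the other factor, the Sidon property gives
\[
\sum_xf_r^2=\sum_d r_{A-A}(d)\,(k_r*\tilde k_r)(d)\le |A|\,s\sum_i(p_i^{(r)})^2+s^2,
\]
because $r_{A-A}(d)\le1$ for $d\ne0$ and $k_r*\tilde k_r$ has total mass $s^2$. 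Averaging over $r$ gives $|A|sa/m+s^2$. Altogether
\[
|A|^2\le (N+bM)\bigl(1+a|A|/M\bigr)=N+bM+aN|A|/M+ab|A|.
\]

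\emph{Optimization.} Using $|A|\le\sqrt N+O(N^{1/4})$ from the Erd\H{o}s--Tur\'an bound \eqref{Erdos-Turan-upper-bound-sidon} inside the term $aN|A|/M$, we choose $s$ so that $M=ms$ is within $m$ of $\sqrt{a/b}\,N^{3/4}$. This gives
\[
|A|^2\le N+2\sqrt{ab}\,N^{3/4}+O(N^{1/2}).
\]
Here $b>0$ is needed for this choice of $M$, and $a\ge1$ by $\sum p_i=1$. Taking square roots yields $|A|\le\sqrt N+\sqrt{ab}N^{1/4}+O(1)$. To obtain exactly $O(1)$ we treat the inequality as a quadratic in $|A|$ rather than substituting crudely. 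Since $M\asymp N^{3/4}$, the condition $N\ge 2LM$ holds for large $N$.

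\emph{Main obstacle.} I expect the main obstacle to be the first step: making the discrete covering identity exact for elements $a$ not aligned with the blocks. The inequality \eqref{eq:covering} is indexed only by block positions $q$, while $a$ ranges over all integers. The first thing I would try is to replace the deterministic block grid by a random shift $t\in\{0,\dots,s-1\}$ of the whole grid, averaging the Cauchy--Schwarz inequality over $t$. For a fixed $a$, the shifted configuration is then an exact block-aligned instance of \eqref{eq:covering}, and the averaged weight norms are unchanged.
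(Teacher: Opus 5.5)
Your construction is the paper's proof in all essentials. You use step-function kernels from $p^{(r)}$ with block length $s$ (the paper's $h$), step-function boundary weights from $w^{(r)}$ reflected at the right end, $\lambda$-weighted Cauchy--Schwarz over $(r,x)$ (the paper's direct-sum Hilbert space), the Sidon autocorrelation bound, and the scale $M\approx\sqrt{a/b}\,N^{3/4}$. Your bookkeeping is correct: $\sum_r\lambda_r\sum_xW_r^2=N+bM$, $\sum_r\lambda_r\sum_xf_r^2\le |A|sa/m+s^2$, and $|A|^2\le(N+bM)(1+a|A|/M)$.

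One step, however, would break the stated coefficient: ``adjusting $N$ by $O(s)$ so that $N+M$ is a multiple of $s$.'' Since $s=M/m\asymp\sqrt{a/b}\,N^{3/4}/m$, enlarging $N$ by up to $s-1$ changes $\sqrt N$ by up to about $\sqrt{a/b}\,N^{1/4}/(2m)$. In the worst case you would prove the coefficient $\sqrt{ab}+\sqrt{a/b}/(2m)$ rather than $\sqrt{ab}$. The step is also unnecessary. With $W_r(x)=W_r(N+M-1-x)$ and $k_r(y)=k_r(M-1-y)$, the substitution $x'=N+M-1-x$ turns $\sum_xW_r(x)k_r(x-a)$ into the left-end sum at $a'=N-a\ge1$. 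The left-end argument accepts any offset $a'\bmod s$, so the right zone never needs to align with the left grid. Simply delete the adjustment.

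Your ``main obstacle'' is not actually an obstacle, and the option you list first already resolves it exactly; this is how the paper does it. For $a=qs+t$ with $0\le t<s$, the $i$th kernel block meets exactly $s-t$ points of weight block $q+i$ and $t$ points of block $q+i+1$. Hence
\[
\sum_x W_r(x)\,k_r(x-a)=(s-t)\sum_i p_i^{(r)}w^{(r)}_{q+i}+t\sum_i p_i^{(r)}w^{(r)}_{q+i+1}
\]
holds with equality, not approximately. After averaging over $r$, this is $s$ times a convex combination of the covering inequalities at $q$ and $q+1$, hence at least $s$. This is why the inequality with $q=Lm$ is included: it is needed when $q=Lm-1$.

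The alternatives you suggest do not work as described. A weight ``aligned with $a \bmod s$'' is not allowed, because Cauchy--Schwarz needs a single weight function for all $a\in A$. Under a random grid shift, a fixed $a$ is block-aligned for only one of the $s$ shifts, so the non-aligned computation above is required anyway. Commit to the two-adjacent-blocks identity, drop the $N$-adjustment, and the proof is complete.
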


\begin{proof}
For sufficiently large $N$, let $A\subset\{0,1,\ldots,N-1\}$ be a Sidon set and set $k=|A|$. Define the indicator function of $A$,  $\mathbf 1_A:\mathbb Z\to\{0,1\}$ by $\mathbf 1_A(n)=1$ if $n\in A$, and $\mathbf 1_A(n)=0$ otherwise.

We begin with the standard device of smoothing $\mathbf 1_A$ by convolution with a finitely supported probability kernel.
Here and below, a probability kernel on $\mathbb Z$ means a nonnegative sequence of total mass one. Let $h$ be a positive integer and set $H=mh$. For each $r$, define a finitely supported probability kernel $K^{(r)}=(K_s^{(r)})_{s\in\mathbb Z}$ by 
\begin{align*}
    K_s^{(r)}:=
\begin{cases}
 \dfrac{p_i^{(r)}}{h},& \text{if } ih\le s<(i+1)h  \text{ for some } 0\le i<m,\\[5pt]
 0, & \text{otherwise}.
\end{cases}
\end{align*}
The symmetry of $p^{(r)}$ gives
$K_s^{(r)}=K_{H-1-s}^{(r)}$. For each \(r\), define \(u_r := \mathbf{1}_A * K^{(r)}\). That is, for \(n \in \mathbb{Z}\),
\[
u_r(n) := (\mathbf{1}_A * K^{(r)})(n) = \sum_{x \in A} K^{(r)}_{n-x}.
\]

Consider the weighted energy
\[
\mathcal E:=\sum_{r=1}^R\lambda_r\|u_r\|_2^2.
\]
We first obtain an upper bound of $\mathcal E$ from the Sidon set property. Let
\[
C_r(d):=\sum_{s\in\Z}K_s^{(r)}K_{s+d}^{(r)}
\]
be the autocorrelation of $K^{(r)}$. 
Clearly, $C_r$ is  nonnegative.
A change of variables gives
\[
C_r(-d) = \sum_{s\in\mathbb Z} K_s^{(r)} K_{s-d}^{(r)} = \sum_{t\in\mathbb Z} K_{t+d}^{(r)} K_t^{(r)} = C_r(d),
\]
so \(C_r\) is even. Moreover, the normalization of $K^{(r)}$  gives
\[
\sum_{d\in\mathbb Z} C_r(d) = \sum_{s\in\mathbb Z} K_s^{(r)} \sum_{d\in\mathbb Z} K_{s+d}^{(r)} = \left( \sum_{s\in\mathbb Z} K_s^{(r)} \right)^2 = 1.
\]

Since $A$ is Sidon, the positive differences $a-b$, with $a,b\in A$ and $a>b$, are pairwise distinct. Put $\Delta(A):=\{a-b:a,b\in A,\ a>b\}$.
Hence
\begin{align*}
\|u_r\|_2^2
&=\sum_{n\in\mathbb Z}|u_r(n)|^2
=\sum_{n\in\mathbb Z}  \left(\sum_{a\in A}K^{(r)}_{n-a}\right) \left(\sum_{b\in A}K^{(r)}_{n-b}\right)\\
&= \sum_{a,b\in A}\sum_{n\in\mathbb Z}K^{(r)}_{n-a}K^{(r)}_{n-b} =\sum_{a,b\in A}C_r(a-b)\\
&=kC_r(0)+2\sum_{d\in\Delta(A)}C_r(d) \le kC_r(0)+2\sum_{d\ge1}C_r(d)\\
&=1+(k-1)C_r(0).
\end{align*}
By the definition of $K^{(r)}$ and $H=mh$,
\[
C_r(0)=\sum_s\left(K_s^{(r)}\right)^2
=\frac mH\sum_{i=0}^{m-1}\left(p_i^{(r)}\right)^2.
\]
Averaging with respect to the mixing weights $\lambda_r$, and using $\sum_{r=1}^R\lambda_r=1$, we obtain
\begin{align}\label{eq:energy-upper}
\mathcal{E}&=\sum_{r=1}^R\lambda_r\|u_r\|_2^2
 \le1+\frac{k-1}{H}m\sum_{r=1}^R\lambda_r \sum_{i=0}^{m-1}\bigl(p_i^{(r)}\bigr)^2
 =1+\frac{a(k-1)}{H}.
\end{align}

We now turn to the lower bound. The support of every $u_r$ is contained in
\[
J:=\{0,1,\ldots,N+H-2\}.
\]
For the choice of $H$ made at the end of the proof, we have $H=O(N^{3/4})=o(N)$. Thus, since $L$ is fixed, we may assume throughout that $N\ge 2LH$.

We next construct auxiliary weight functions that enforce the required covering inequality near the two endpoints while remaining equal to $1$ in the interior. Define the left and right boundary regions by $B_{\mathrm L}:=\{0,\ldots,LH-1\}$ and $B_{\mathrm R}:=\{N+H-1-LH,\ldots,N+H-2\}$.
The assumption $N\ge 2LH$ ensures that these regions are disjoint.
For $0\le x<N$, the discrete interval $I_x:=\{x,x+1,\ldots,x+H-1\}$ meets $B_{\mathrm L}$ only if $x<LH$ and meets $B_{\mathrm R}$ only if $x\ge N-LH$. In particular, it meets at most one boundary region.
For each $r$, define $Q_r:J\to\mathbb R$ by
\[
Q_r(n)=
\begin{cases}
w_j^{(r)},
& jh\le n<(j+1)h,\quad 0\le j<Lm,\\
w_j^{(r)},
& jh\le N+H-2-n<(j+1)h,\quad 0\le j<Lm,\\
1,
& \text{otherwise}.
\end{cases}
\]

We claim that, for every $x\in\{0,\ldots,N-1\}$,
\begin{equation}\label{eq:boundary-claim}
\sum_{r=1}^R\lambda_r\sum_{s=0}^{H-1}K_s^{(r)}Q_r(x+s)\ge1.
\end{equation}
We distinguish the following three ranges:
\[
0\le x<LH,\qquad
LH\le x<N-LH,\qquad
N-LH\le x<N.
\] 

First suppose that $0\le x\le LH-1$, and write $x=qh+t$ with $0\le q<Lm$ and $0\le t<h$. The convention $w_j^{(r)}=1$ for $j\ge Lm$ accounts for the part of $I_x$ lying beyond $B_{\mathrm L}$. For the $i$th block of $K^{(r)}$, exactly $h-t$ points are assigned the boundary value $w_{q+i}^{(r)}$ and exactly $t$ points are assigned the boundary value $w_{q+i+1}^{(r)}$. Consequently the left-hand side of
\eqref{eq:boundary-claim} equals
\[
\begin{aligned}
\left(1-\frac th\right)
\sum_{r=1}^R\lambda_r\sum_{i=0}^{m-1}p_i^{(r)}w_{q+i}^{(r)} +\frac th
\sum_{r=1}^R\lambda_r\sum_{i=0}^{m-1}p_i^{(r)}w_{q+i+1}^{(r)}.
\end{aligned}
\]
This is a convex combination of the finite covering inequalities with indices $q$ and $q+1$, and hence is at least $1$ by \eqref{eq:covering}. Notice that the case $q=Lm-1$ uses the final finite covering inequality with index $Lm$.

Next suppose that $N-LH\le x\le N-1$, and write
$x=N-1-(qh+t)$ with $0\le q<Lm$ and $0\le t<h$. Set
$s'=H-1-s$. The symmetry of $K^{(r)}$ gives
$K_s^{(r)}=K_{s'}^{(r)}$, while
\[
N+H-2-(x+s)=qh+t+s'.
\]
The reflected definition of $Q_r$ therefore reduces the right boundary case
to the preceding left boundary calculation.

Finally, if
$LH\le x\le N-LH-1$, then $x+s$ lies in neither boundary region for every
$0\le s<H$. Every relevant value of $Q_r$ is $1$, so equality holds in
\eqref{eq:boundary-claim}. Indeed, in this case its left-hand side
is 
\[
\sum_{r=1}^R\lambda_r\sum_{s=0}^{H-1}K_s^{(r)}=1.
\]
This proves the claim.

Summing \eqref{eq:boundary-claim} over $x\in A$ and interchanging sums gives
\begin{align}\label{EQ:upper-bound-k}
    k
&\le
\sum_{r=1}^R \lambda_r
\sum_{x\in A}\sum_{s=0}^{H-1}
K_s^{(r)}Q_r(x+s) \notag\\
&=
\sum_{r=1}^R \lambda_r
\sum_{n\in J} Q_r(n)
\sum_{x\in A}K_{n-x}^{(r)}
=
\sum_{r=1}^R \lambda_r\langle Q_r,u_r\rangle .
\end{align}
Since $\operatorname{supp}(u_r)\subseteq J$, we may regard each $u_r$ as an element of the Hilbert space $\ell^2(J)$. Let $\mathcal H:=\bigoplus_{r=1}^R\ell^2(J)$ be the direct sum of $R$ copies of  $\ell^2(J)$, and define $\mathbf Q:=\bigl(\sqrt{\lambda_r}Q_r\bigr)_{r=1}^R$ and $\mathbf u:=\bigl(\sqrt{\lambda_r}u_r\bigr)_{r=1}^R$. 
By the definition of the direct-sum inner product, 
\begin{align}\label{EQ:inner-product-Q-u}
\langle\mathbf Q,\mathbf u\rangle_{\mathcal H}
=\sum_{r=1}^R
\bigl\langle\sqrt{\lambda_r}Q_r,
\sqrt{\lambda_r}u_r\bigr\rangle_{\ell^2(J)}
=\sum_{r=1}^R\lambda_r\langle Q_r,u_r\rangle.
\end{align}
Combining \eqref{EQ:upper-bound-k} and \eqref{EQ:inner-product-Q-u}, we have 
\[
k\le\langle\mathbf Q,\mathbf u\rangle_{\mathcal H}.
\] 
Since the latter inner product is nonnegative, we may square this inequality and apply the Cauchy--Schwarz inequality in $\mathcal H$ to obtain
\begin{align}\label{eq:cauchy}
k^2  \le \bigl|\langle\mathbf Q,\mathbf u\rangle_{\mathcal H}\bigr|^2
    \le \|\mathbf Q\|_{\mathcal H}^2\|\mathbf u\|_{\mathcal H}^2 
    =\left(\sum_{r=1}^R\lambda_r\|Q_r\|_2^2\right) \mathcal E.
\end{align}
Note that 
\begin{align*}
    \|Q_r\|_2^2
&=N+H-1-2LH
  +2h\sum_{j=0}^{Lm-1}\bigl(w_j^{(r)}\bigr)^2\\
&=N+H-1
  +2h\sum_{j=0}^{Lm-1}
       \left(\bigl(w_j^{(r)}\bigr)^2-1\right).
\end{align*}
Averaging this identity over $r$ with  $\lambda_r$ gives
\begin{equation}\label{eq:weight-energy}
\sum_{r=1}^R \lambda_r \|Q_r\|_2^2
=N+H-1+2h\sum_{r=1}^R \lambda_r\sum_{j=0}^{Lm-1}\left(\bigl(w_j^{(r)}\bigr)^2-1\right)
=N+bH-1,
\end{equation}
where the last identity follows from \eqref{eq:b}.

Combining \eqref{eq:cauchy} and \eqref{eq:weight-energy} gives
\begin{equation}\label{eq:energy-lower}
\mathcal E
=\sum_{r=1}^R\lambda_r\|u_r\|_2^2
\ge
\frac{k^2}{\sum_{r=1}^R\lambda_r\|Q_r\|_2^2}
=
\frac{k^2}{N+bH-1}.
\end{equation}
Combining this with \eqref{eq:energy-upper} gives 
\begin{equation}\label{eq:master}
k^2\le(N+bH-1)\left(1+\frac{a(k-1)}H\right).
\end{equation}

It remains to choose the smoothing scale $H$. It follows from  $\binom{k}{2}\le N-1$ that $k=O(N^{1/2})$. Moreover, the Cauchy--Schwarz inequality gives
\begin{align*}
    m\sum_{i=0}^{m-1}\left(p_i^{(r)}\right)^2 \ge \left(\sum_{i=0}^{m-1}p_i^{(r)}\right)^2=1,
\end{align*}
for every $r$. Therefore $a\ge1$. Since $b>0$, the following choice is well defined.
Choose
\[
H \coloneqq m\left\lfloor \frac{1}{m}\sqrt{\frac{a}{b}}\,N^{3/4} +\frac12 \right\rfloor.
\]
For all sufficiently large $N$,  $H=\sqrt{a/b}\,N^{3/4}+O(1)$.
Expanding \eqref{eq:master} and using $k=O(N^{1/2})$ gives
\begin{align}\label{EQ:upper-bound-k2-ab}
k^2\le{}N+bH-1+\frac{aN(k-1)}H +ab(k-1)-\frac{a(k-1)}H.
\end{align}

We  estimate each term on the right-hand side of \eqref{EQ:upper-bound-k2-ab}. The estimate for $H$  gives 
\begin{align*}
    \frac{aN}{H}=\sqrt{ab}\,N^{1/4}+O(N^{-1/2}).
\end{align*}
By the classical Erd\H{o}s--Tur\'an bound
\eqref{Erdos-Turan-upper-bound-sidon},
$k-1\le N^{1/2}+O(N^{1/4})$. Combining this with the preceding estimate,
\begin{align*}
bH =\sqrt{ab}\,N^{3/4}+O(1), \qquad
\frac{aN(k-1)}H
&\le \sqrt{ab}\,N^{3/4}+O(N^{1/2}).
\end{align*}
Moreover, $ab(k-1)=O(N^{1/2})$. Therefore \eqref{EQ:upper-bound-k2-ab} gives
\[
k^2\le N+2\sqrt{ab}\,N^{3/4}+O(N^{1/2}).
\]
Using $(1+x)^{1/2}=1+x/2+O(x^2)$ as $x\to0$, we conclude that
\begin{equation*}
    k\le N^{1/2}\left(1+2\sqrt{ab}N^{-1/4}+O(N^{-1/2})\right)^{1/2} =N^{1/2}+\sqrt{ab}\,N^{1/4}+O(1). \qedhere
\end{equation*}
\end{proof}

We remark that the key idea in our application of Theorem~\ref{thm:kernel-transfer} is to use a genuinely vector-valued kernel--boundary system, in the sense that it uses more than one positive-weight kernel--boundary pair. The exact eight-pair certificate in Section~\ref{sec:certificate} has the stronger property that all eight mixing weights are positive and every constituent kernel--boundary pair violates at least one scalar covering inequality; only their weighted combination is feasible. This should not be confused with simply averaging several kernel–boundary pairs that are each feasible when used alone, that is, in the scalar case  $R=1$.  In fact, such a mixture can never improve upon the best constituent pair used alone.  We now explain this distinction in detail. For a single kernel--boundary pair  $(p^{(r)},w^{(r)})$, we call it \emph{scalar-feasible} if it satisfies \eqref{eq:covering} by itself (with $R=1$ and $\lambda_1=1$). 
Put
\[
a_r:=m\sum_{i=0}^{m-1}\bigl(p_i^{(r)}\bigr)^2,
\qquad
b_r:=1+2\left(\frac1m\sum_{j=0}^{Lm-1}
\bigl(w_j^{(r)}\bigr)^2-L\right).
\]
For this pair used alone, $a_r$ records the kernel energy and $b_r$ the boundary cost. 
The argument in Subsection~\ref{subsec:positive-b}, applied with $R=1$, gives $b_r\ge 1/m>0$. Hence Theorem~\ref{thm:kernel-transfer} applies to $(p^{(r)},w^{(r)})$ and gives the coefficient $\sqrt{a_rb_r}$. By the definitions in \eqref{eq:a} and \eqref{eq:b}, the system quantities satisfy
\[
a=\sum_{r=1}^R\lambda_ra_r,
\qquad b=\sum_{r=1}^{R}\lambda_rb_r.
\]

\begin{proposition}
\label{prop:scalar-bottleneck}
Suppose that every pair $(p^{(r)},w^{(r)})$ is scalar-feasible.  If
$\lambda_r\ge0$,
$\sum_r\lambda_r=1$, then
\[
\sqrt{ab}\ge \min_{r:\lambda_r>0}\sqrt{a_rb_r}.
\]
\end{proposition}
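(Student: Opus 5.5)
We must show that $\sqrt{ab}$, with $a=\sum_r\lambda_r a_r$ and $b=\sum_r\lambda_r b_r$, is at least $\min_{r:\lambda_r>0}\sqrt{a_rb_r}$. The plan is to reduce this to an elementary inequality about averages of positive numbers. The only input from the covering inequalities is positivity of the constituents. We have $a_r\ge1$ for every $r$, by the Cauchy--Schwarz computation in the proof of Theorem~\ref{thm:kernel-transfer}. We also have $b_r\ge 1/m>0$ for every scalar-feasible pair, as noted above via Subsection~\ref{subsec:positive-b}. Terms with $\lambda_r=0$ contribute nothing to $a$ or $b$, so we discard them and assume every $\lambda_r>0$.

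Set $c:=\min_r\sqrt{a_rb_r}>0$, so that $b_r\ge c^2/a_r$ for every $r$. Then
\[
ab=\Bigl(\sum_r\lambda_ra_r\Bigr)\Bigl(\sum_r\lambda_rb_r\Bigr)\ge c^2\Bigl(\sum_r\lambda_ra_r\Bigr)\Bigl(\sum_r\frac{\lambda_r}{a_r}\Bigr).
\]
The last product is at least $1$, which follows from either of two standard facts:
\begin{itemize}
\item[(i)] Cauchy--Schwarz, $\bigl(\sum_r\lambda_r\bigr)^2=\bigl(\sum_r\sqrt{\lambda_ra_r}\sqrt{\lambda_r/a_r}\bigr)^2\le\bigl(\sum_r\lambda_ra_r\bigr)\bigl(\sum_r\lambda_r/a_r\bigr)$, using $\sum_r\lambda_r=1$;
\item[(ii)] convexity of $t\mapsto1/t$ via Jensen's inequality.
\end{itemize}
Hence $ab\ge c^2$, and taking square roots (both sides are nonnegative) gives the claim.

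An equivalent route is the two-step chain $\sqrt{ab}\ge\sum_r\lambda_r\sqrt{a_rb_r}\ge\min_r\sqrt{a_rb_r}$. The first step is Cauchy--Schwarz applied to the vectors $(\sqrt{\lambda_ra_r})_r$ and $(\sqrt{\lambda_rb_r})_r$. The second holds because a convex combination is at least its smallest term. I would present this version, since it is shorter and also shows the mixture is no better than the \emph{average} constituent coefficient.

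There is no real obstacle. The only point needing care is the positivity of each $b_r$, which is exactly where scalar feasibility enters. Without it, some $b_r$ could be negative and the Cauchy--Schwarz step would fail. This matches the remark that a vector-valued gain requires constituents with complementary boundary deficits.
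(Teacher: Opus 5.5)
Your proposal is correct, and the two-step chain you say you would present, $\sqrt{ab}\ge\sum_r\lambda_r\sqrt{a_rb_r}\ge\min_{r:\lambda_r>0}\sqrt{a_rb_r}$ via Cauchy--Schwarz on $(\sqrt{\lambda_ra_r})_r$ and $(\sqrt{\lambda_rb_r})_r$, is exactly the paper's proof. You also check explicitly that $a_r\ge1$ and $b_r\ge1/m>0$, which justifies taking the square roots; the paper leaves this point implicit.
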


\begin{proof}
The Cauchy--Schwarz inequality gives
\[
ab\ge\left(\sum_r\lambda_r\sqrt{a_rb_r}\right)^2
\ge \min_{r:\lambda_r>0}a_rb_r,
\]
which proves the proposition.
\end{proof}

We also remark that the hypothesis of Proposition~\ref{prop:scalar-bottleneck} is deliberately
stronger than the finite covering inequalities \eqref{eq:covering}.  Indeed, define the boundary deficit
\[
d_r(q):=1-\sum_{i=0}^{m-1}p_i^{(r)}w_{q+i}^{(r)}.
\]
If every kernel--boundary pair is scalar-feasible, then
\[
d_r(q)\le 0
\qquad\text{for every }r\text{ and every }0\le q\le Lm.
\]
By contrast, the finite covering inequalities in \eqref{eq:covering} require only
\[
\sum_{r=1}^R \lambda_r d_r(q)\le 0
\qquad\text{for every }0\le q\le Lm.
\]
Thus the individual deficits need not be nonpositive: only their
weighted sum must be nonpositive at each boundary index.

\section{Finite kernel-vector optimization}\label{sec:optimization}

This section develops the finite optimization needed to construct
certificates for Theorem~\ref{thm:kernel-transfer}.  The quadratic-program
identities are exact.  By contrast, the outer search in
Section~\ref{subsec:outer-search} is exploratory and has no logical role in the
final proof.

\subsection{Exact boundary optimization}\label{subsec:exact-inner}

For fixed $R,m,L$, we seek to minimize $\sqrt{ab}$ over the mixing weights $\lambda_r$, symmetric kernel vectors $p^{(r)}$ and boundary vectors $w^{(r)}$ subject to \eqref{eq:covering}. The full problem is nonconvex because the finite covering inequalities \eqref{eq:covering} contain products involving kernel-vector entries, mixing weights, and boundary-vector entries. However, once the kernel vectors and mixing weights are fixed, the boundary problem becomes a strictly convex quadratic program after all zero-weight pairs are discarded.

Put $n=Lm$. If $\lambda_r=0$, the corresponding kernel--boundary pair contributes neither to the finite covering inequalities nor to the objective. We may therefore discard every zero-weight pair and relabel the remaining pairs and mixing weights. Henceforth we assume $\lambda_r>0$ for $1\le r\le R$. 
Write the stacked boundary vector as
\[
w=(w_j^{(r)}:1\le r\le R,\ 0\le j<n)\in\mathbb R^{Rn},
\qquad
D=\operatorname{diag}(\lambda_1I_n,\ldots,\lambda_RI_n).
\]
Then $D$  is positive definite. Index the rows of the matrix $B\in\mathbb R^{n\times Rn}$ by $0\le q<n$ and its columns by pairs $(r,j)$ with $1\le r\le R$ and $0\le j<n$, ordered first by $r$ and then by $j$. Define
\[
B_{q,(r,j)}=
\begin{cases}
\lambda_rp_{j-q}^{(r)},&0\le j-q<m,\\
0,&\text{otherwise},
\end{cases}
\]
or, equivalently,
\[
B=\bigl(\lambda_1B^{(1)}\ \cdots\ \lambda_RB^{(R)}\bigr),
\qquad B^{(r)}\in\mathbb R^{n\times n},
\]
\[
B^{(r)}_{q,j}=
\begin{cases}
p_{j-q}^{(r)},&q\le j\le \min\{q+m-1,n-1\},\\
0,&\text{otherwise}.
\end{cases}
\]
Observe that each $B^{(r)}\in\mathbb R^{n\times n}$ is an upper-triangular
banded matrix, and 
\[
(Bw)_q=\sum_{r=1}^R\lambda_r
\sum_{\substack{0\le i<m\\q+i<n}}
p_i^{(r)}w_{q+i}^{(r)}
\qquad(0\le q<n).
\]
Define the residual-coverage vector
\[
c=(c_0,\ldots,c_{n-1})^{\mathsf T}\in\mathbb R^n
\]
by
\[
c_q:=1-\sum_{r=1}^R\lambda_r
\sum_{\substack{0\le i<m\\q+i\ge n}}p_i^{(r)}
\qquad(0\le q<n).
\]
With these definitions, the finite covering inequalities in \eqref{eq:covering} take the following standard matrix form: 
\[
Bw\ge c. 
\]
Indeed, for $0\le q<n$, split the left-hand side of the finite covering inequality indexed by $q$ according as $q+i<n$ or $q+i\ge n$. Since $w_{q+i}^{(r)}=1$ in the latter range, we obtain
\[
\begin{aligned}
\sum_{r=1}^R\lambda_r\sum_{i=0}^{m-1}
p_i^{(r)}w_{q+i}^{(r)}
&=\sum_{r=1}^R\lambda_r
\sum_{\substack{0\le i<m\\q+i<n}}
p_i^{(r)}w_{q+i}^{(r)}
+\sum_{r=1}^R\lambda_r
\sum_{\substack{0\le i<m\\q+i\ge n}}p_i^{(r)}\\
&=(Bw)_q+1-c_q.
\end{aligned}
\]
The remaining finite covering inequality, with $q=n$, is automatic, since every index $q+i$ then lies in the fixed tail where $w_{q+i}^{(r)}=1$.

Probability normalization also gives
\begin{equation}\label{EQ:c=Atimes1}
    c_q=\sum_{r=1}^R\lambda_r
\sum_{\substack{0\le i<m\\q+i<n}}p_i^{(r)}
=(B\mathbf1_{Rn})_q.
\end{equation}
Thus those indexed by $0\le q<n$ are the only potentially nontrivial finite covering inequalities.
For fixed kernel vectors and mixing weights, minimizing $b$ is therefore equivalent to
\begin{equation}\label{eq:primal-qp}
\Phi_*:=\min\{w^{\mathsf T}Dw:Bw\ge c\},
\qquad
b_*=1+2\left(\frac{\Phi_*}{m}-L\right).
\end{equation}

\begin{lemma}\label{prop:finite-dual}
Let $G=BD^{-1}B^{\mathsf T}$. The dual of \eqref{eq:primal-qp} is
\begin{equation*}
\Phi_*=
\max_{y\ge0}\left\{c^{\mathsf T}y-\frac14y^{\mathsf T}Gy\right\}.
\end{equation*}
A primal-feasible stacked boundary vector $w$ and a dual-feasible vector $y\ge0$ are optimal if
\begin{equation}\label{EQ:KKT-condition}
2Dw=B^{\mathsf T}y,
\qquad
y_q\bigl((Bw)_q-c_q\bigr)=0\quad(0\le q<n).    
\end{equation}
The primal minimizer is unique.
\end{lemma}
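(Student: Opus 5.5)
The plan is standard Lagrangian duality for a convex quadratic program with linear inequality constraints. First, I record the structure. $D$ is positive definite because every $\lambda_r>0$ after the zero-weight pairs are discarded. Hence $w\mapsto w^{\mathsf T}Dw$ is strictly convex and coercive. The feasible set $\{w:Bw\ge c\}$ is a nonempty closed polyhedron: by \eqref{EQ:c=Atimes1}, $w=\mathbf 1_{Rn}$ satisfies $Bw=c$. So a minimizer exists. It is unique because two distinct minimizers $w\neq w'$ would give the feasible midpoint a strictly smaller value, since $w^{\mathsf T}Dw$ is strictly convex.

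Next I form the Lagrangian
\[
\mathcal L(w,y)=w^{\mathsf T}Dw-y^{\mathsf T}(Bw-c),\qquad y\ge0.
\]
For fixed $y$, this is a strictly convex quadratic in $w$. It is minimized where $2Dw=B^{\mathsf T}y$, that is, at $w=\tfrac12D^{-1}B^{\mathsf T}y$. Substituting gives the dual function
\[
g(y)=c^{\mathsf T}y-\tfrac14y^{\mathsf T}BD^{-1}B^{\mathsf T}y=c^{\mathsf T}y-\tfrac14y^{\mathsf T}Gy.
\]
Weak duality follows immediately. For primal-feasible $w$ and $y\ge0$,
\[
w^{\mathsf T}Dw\ge w^{\mathsf T}Dw-y^{\mathsf T}(Bw-c)=\mathcal L(w,y)\ge g(y).
\]

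For the sufficiency of \eqref{EQ:KKT-condition}: if $2Dw=B^{\mathsf T}y$, then $w$ minimizes $\mathcal L(\cdot,y)$, so $\mathcal L(w,y)=g(y)$. Complementary slackness kills the term $y^{\mathsf T}(Bw-c)$, so $w^{\mathsf T}Dw=g(y)$. By weak duality, $w$ is then primal optimal, $y$ is dual optimal, and the two optimal values coincide.

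The main step is strong duality, namely the equality $\Phi_*=\max_{y\ge0}g(y)$ together with attainment of the maximum. I would obtain it from the KKT theorem for linearly constrained convex programs. Since the constraints are affine, no Slater condition is needed: Abadie's constraint qualification holds automatically for polyhedral feasible sets. Concretely, at the unique minimizer $w_*$, the first-order optimality condition says that $-2Dw_*$ lies in the normal cone of the polyhedron at $w_*$. By Farkas' lemma, this normal cone is generated by $-B_q^{\mathsf T}$ over the active rows $q$. This produces $y\ge0$, supported on the active constraints, with $2Dw_*=B^{\mathsf T}y$. Thus \eqref{EQ:KKT-condition} holds, and the previous paragraph shows that $y$ attains the dual maximum with value $\Phi_*$.

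I expect the Farkas/normal-cone step to be the main point requiring care. The remaining steps are routine algebra.
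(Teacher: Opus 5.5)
Your proof is correct. The Lagrangian computation, weak duality, and the sufficiency of the KKT conditions match the paper, but you take a different route at the two steps that carry real content.

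\textbf{Strong duality.} The paper invokes Slater's theorem. To obtain a strictly feasible point, it first deletes the zero rows of $B$ (where $c_q=0$ and the constraint reads $0\ge0$). It then notes that every remaining $c_q=(B\mathbf 1_{Rn})_q$ is positive, so $M\mathbf 1_{Rn}$ with $M>1$ is strictly feasible. You instead use the fact that affine constraints need no constraint qualification. At the minimizer, the feasible directions are exactly those cut out by the active rows $I$. Farkas' lemma identifies the polar cone as $\{-B_I^{\mathsf T}\mu:\mu\ge0\}$, so the first-order condition directly produces a multiplier with $2Dw_*=B^{\mathsf T}y$ and complementary slackness. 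Your sufficiency step then gives dual attainment and equality of the optimal values. This route needs only primal feasibility, which you get from $B\mathbf 1_{Rn}=c$. It avoids the zero-row bookkeeping and the appeal to Slater's theorem as a black box, and its multiplier is supported on the active constraints.

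\textbf{Uniqueness.} The paper reads uniqueness off the comparison identity $v^{\mathsf T}Dv-w^{\mathsf T}Dw=y^{\mathsf T}B(v-w)+(v-w)^{\mathsf T}D(v-w)$. That argument presupposes a KKT pair at the optimum, so it rests on the strong-duality step. Your midpoint argument uses only strict convexity of $w\mapsto w^{\mathsf T}Dw$ and convexity of the feasible set, so it is independent of duality.

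The paper's version is quicker to state given standard citations; yours is more self-contained and uses weaker hypotheses.
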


\begin{proof}
For $y\ge0$, consider the Lagrangian
\[
\mathcal L(w,y):=w^{\mathsf T}Dw-y^{\mathsf T}(Bw-c).
\]
Since $D$ is positive definite, completing the square gives
\[
\begin{aligned}
\mathcal L(w,y)
&=\left(w-\frac12D^{-1}B^{\mathsf T}y\right)^{\mathsf T}
D\left(w-\frac12D^{-1}B^{\mathsf T}y\right)\\
&\quad+c^{\mathsf T}y-\frac14y^{\mathsf T}Gy,
\end{aligned}
\]
and hence
\[
\inf_w\mathcal L(w,y)
=c^{\mathsf T}y-\frac14y^{\mathsf T}Gy.
\]
This is the stated dual objective and, in particular, proves weak duality.

Recall from  \eqref{EQ:c=Atimes1} that $c=B\mathbf1_{Rn}$. If the row of $B$ indexed by $q$ is zero, then $c_q=0$, so the corresponding primal constraint is the identity $0\ge0$ and is redundant. Delete all such rows temporarily. Since $B$ is entrywise nonnegative and every remaining row is nonzero, we have $c_q=(B\mathbf1_{Rn})_q>0$ for every remaining index $q$. Hence, for any $M>1$,
\[
B(M\mathbf1_{Rn})=Mc>c.
\]
Thus $M\mathbf1_{Rn}$ is strictly feasible and the reduced problem satisfies
Slater's condition. Its feasible set is nonempty and closed. Moreover, if
$d_{\min}>0$ is the smallest diagonal entry of $D$, then
\[
w^{\mathsf T}Dw\ge d_{\min}\lVert w\rVert_2^2,
\]
so the objective is coercive. Consequently, every minimizing sequence is
bounded and has a convergent subsequence whose limit is feasible; hence the
primal minimum is attained. Slater's theorem now gives a dual maximizer and
equality of the primal and dual optimal values. Reinstating a deleted row
changes neither problem, because its primal constraint is $0\ge0$ and its
dual variable may be set to zero.

It remains to verify the stated optimality conditions explicitly. Suppose
that $w$ is primal feasible, $y\ge0$, and the two equations in the statement
hold. For any other primal feasible vector $v$, stationarity gives
\[
\begin{aligned}
v^{\mathsf T}Dv-w^{\mathsf T}Dw
&=2w^{\mathsf T}D(v-w)+(v-w)^{\mathsf T}D(v-w)\\
&=y^{\mathsf T}B(v-w)+(v-w)^{\mathsf T}D(v-w).
\end{aligned}
\]
By dual feasibility, primal feasibility, and complementary slackness,
\[
y^{\mathsf T}B(v-w)
=y^{\mathsf T}(Bv-c)-y^{\mathsf T}(Bw-c)\ge0.
\]
It follows that $v^{\mathsf T}Dv\ge w^{\mathsf T}Dw$, so $w$ is primal
optimal. Stationarity also says that $w$ minimizes the Lagrangian for this
$y$, while complementary slackness gives
\[
\inf_v\mathcal L(v,y)=\mathcal L(w,y)
=w^{\mathsf T}Dw-y^{\mathsf T}(Bw-c)=w^{\mathsf T}Dw.
\]
Thus $y$ is dual optimal as well. Finally, if $v\ne w$, then positive definiteness of $D$ gives $(v-w)^{\mathsf T}D(v-w)>0$ in the preceding comparison. Hence no second primal minimizer exists, and the primal minimizer is unique.
\end{proof}

\subsection{Automatic positivity}\label{subsec:positive-b}

We now verify that the condition $b>0$ required in
Theorem~\ref{thm:kernel-transfer} holds automatically for every feasible kernel--boundary system. For $0\le j<n$, define
\[
P_j^{(r)}:=\sum_{i=0}^{\min\{j,m-1\}}p_i^{(r)}.
\]
For every column $(r,j)$ of $B$,
\[
(B^{\mathsf T}\mathbf1_n)_{(r,j)}
=\sum_{q=0}^{n-1}B_{q,(r,j)}
=\lambda_r\sum_{i=0}^{\min\{j,m-1\}}p_i^{(r)}
=\lambda_rP_j^{(r)}.
\]
Consequently, using $c=B\mathbf1_{Rn}$ and the block-diagonal form of
$D^{-1}$, we have
\[
\mathbf1_n^{\mathsf T}c
=\sum_{r=1}^R\lambda_r\sum_{j=0}^{n-1}P_j^{(r)},
\qquad
\mathbf1_n^{\mathsf T}BD^{-1}B^{\mathsf T}\mathbf1_n
=\sum_{r=1}^R\lambda_r\sum_{j=0}^{n-1}
\bigl(P_j^{(r)}\bigr)^2.
\]
Since $2\mathbf1_n$ is dual feasible, evaluating the dual objective at this
point gives
\begin{equation}\label{EQ:feasbile-bound-2I}
 \Phi_*\ge
\sum_{r=1}^R\lambda_r\sum_{j=0}^{n-1}
\left(2P_j^{(r)}-\bigl(P_j^{(r)}\bigr)^2\right).   
\end{equation}

Set $f(t)=2t-t^2$. For $0\le j\le m-2$, symmetry gives
$P_j^{(r)}+P_{m-2-j}^{(r)}=1$. Hence, writing
$t=P_j^{(r)}\in[0,1]$, each paired contribution satisfies
\[
f\bigl(P_j^{(r)}\bigr)+f\bigl(P_{m-2-j}^{(r)}\bigr)
=f(t)+f(1-t)=1+2t(1-t)\ge1.
\]
\grammarrev{To make the pairing count explicit, put
$S_r:=\sum_{j=0}^{m-2}f(P_j^{(r)})$. Since
$j\mapsto m-2-j$ is a permutation of $\{0,\ldots,m-2\}$,}
\[
2S_r
=\sum_{j=0}^{m-2}
\left(f\bigl(P_j^{(r)}\bigr)
+f\bigl(P_{m-2-j}^{(r)}\bigr)\right)
\ge m-1.
\]
\grammarrev{Therefore,}
\[
\sum_{j=0}^{m-2}f\bigl(P_j^{(r)}\bigr)=S_r
\ge\frac{m-1}{2}.
\]
For $j\ge m-1$, probability normalization gives $P_j^{(r)}=1$ and hence
$f(P_j^{(r)})=1$. There are $n-m+1$ such indices, so, for every $r$,
\[
\sum_{j=0}^{n-1}f\bigl(P_j^{(r)}\bigr)
\ge\frac{m-1}{2}+n-m+1
=n-\frac{m-1}{2}.
\]
By \eqref{EQ:feasbile-bound-2I}, Lemma \ref{prop:finite-dual}  and the fact $\sum_{r=1}^R\lambda_r=1$, we conclude that 
\[
\Phi_*\ge n-\frac{m-1}{2},
\qquad
b_*\ge
1+2\left(\frac{n-(m-1)/2}{m}-L\right)
=\frac1m>0.
\]
This means 
\[
b\ge b_*\ge1/m.
\]

\subsection{Nonnegative least-squares form}\label{subsec:nnls}

To obtain a form convenient for numerical computation, write the dual
objective in Lemma \ref{prop:finite-dual}  as
\[
g(y):=c^{\mathsf T}y-\frac14y^{\mathsf T}BD^{-1}B^{\mathsf T}y.
\]
Here $c=B\mathbf1_{Rn}$ and
$\mathbf1_{Rn}^{\mathsf T}D\mathbf1_{Rn}
=n\sum_{r=1}^R\lambda_r=n$. Therefore, for every $y\ge0$, expanding the
squared norm gives
\[
\begin{aligned}
\left\|D^{1/2}\mathbf1_{Rn}
-\frac12D^{-1/2}B^{\mathsf T}y\right\|_2^2
&=\mathbf1_{Rn}^{\mathsf T}D\mathbf1_{Rn}
-\mathbf1_{Rn}^{\mathsf T}B^{\mathsf T}y
+\frac14y^{\mathsf T}BD^{-1}B^{\mathsf T}y\\
&=n-c^{\mathsf T}y
+\frac14y^{\mathsf T}BD^{-1}B^{\mathsf T}y\\
&=n-g(y).
\end{aligned}
\]
Since $n$ is independent of $y$, maximizing $g(y)$ over $y\ge0$ is
equivalent to the following nonnegative least-squares problem
\begin{equation}\label{nonnegative-least-squares-problem}
\min_{y\ge0}
\left\|D^{1/2}\mathbf1_{Rn}
-\frac12D^{-1/2}B^{\mathsf T}y\right\|_2^2.
\end{equation}

This is a standard nonnegative least-squares problem with target vector
$D^{1/2}\mathbf1_{Rn}$ and design matrix
$\frac12D^{-1/2}B^{\mathsf T}$. Once a dual maximizer $y_*$ has been
found, strong duality and the KKT conditions recover the unique primal minimizer as
\[
w_*=\frac12D^{-1}B^{\mathsf T}y_*.
\]
We used this formulation to solve the boundary problem with fixed kernel vectors and mixing weights during the exploratory computation.

\subsection{Exploratory outer search}\label{subsec:outer-search}

For each choice of kernel vectors and mixing weights, we formed $B$ and $D$, solved \eqref{nonnegative-least-squares-problem}, recovered $w$ from \eqref{EQ:KKT-condition}, and evaluated $\sqrt{ab}$.  We then varied the kernel-vector and mixing-weight parameters and re-solved the boundary quadratic program.  This is a nonconvex floating-point search for a candidate kernel--boundary system, not a proof of optimality.

Initially, we used free symmetric kernel vectors together with the uniform kernel vector.  Later kernel vectors were parametrized using six Fourier modes.  For $x_i=(i+1/2)/m-1/2$ and $\theta\in\mathbb R^6$, such a kernel vector has the form
\[
p_i(\theta)=
\frac{\displaystyle\exp\!\left(
\sum_{\ell=1}^6\theta_\ell\cos(2\pi\ell x_i)\right)}
{\displaystyle\sum_{j=0}^{m-1}\exp\!\left(
\sum_{\ell=1}^6\theta_\ell\cos(2\pi\ell x_j)\right)}
\qquad(0\le i<m).
\]
This automatically enforces positivity, normalization, and symmetry.  The
mixing weights were parametrized by
\[
\lambda_r=\frac{\exp(\alpha_r)}{\sum_{s=1}^R\exp(\alpha_s)}
\qquad(1\le r\le R),
\]
which automatically gives $\lambda_r>0$ and $\sum_{r=1}^R\lambda_r=1$.  Starting from the $R=2$ kernel--boundary system and adding one kernel vector at a time produced the continuation sequence in Table~\ref{tab:R-scan}.  Further implementation details belong to the exploratory search script and are not needed to verify the theorem.

At $m=32$ and $L=4$, the exploratory search script \nolinkurl{sidon_numerical_search.py} produced the best values encountered in our runs, as shown in Table~\ref{tab:R-scan}.
\begin{table}[ht]
\centering
\small
\renewcommand{\arraystretch}{1.08}

\begin{tabularx}{0.99\textwidth}{@{}
    >{\centering\arraybackslash}p{0.7cm}
    >{\centering\arraybackslash}X
    >{\centering\arraybackslash}p{3.7cm}
@{}}
\toprule
$R$ & search class or certificate & value of $\sqrt{ab}$ \\
\midrule
$1$ & free symmetric kernel vector
    & $0.9461473014$ \\

$2$ & one free symmetric kernel vector and one uniform kernel vector
    & $0.9450510577$ \\

$3$ & continued $R=2$ system plus one free symmetric kernel vector
    & $0.9437979301$ \\

$4$ & continued $R=3$ system plus the first six-mode kernel vector
    & $0.9436448861$ \\

$5$ & continued $R=4$ system plus the second six-mode kernel vector
    & $0.9435665926$ \\

$6$ & continued $R=5$ system plus the third six-mode kernel vector
    & $0.9435030631$ \\

$7$ & continued $R=6$ system plus the fourth six-mode kernel vector
    & $0.9434969530$ \\

$8$ & continued $R=7$ system plus the fifth six-mode kernel vector
    & $0.9434925901$ \\

$8$ & rounded exact rational certificate used in
      Theorem~\ref{thm:main}
    & $0.9434925907\ldots$ \\
\bottomrule
\end{tabularx}

\caption{Exploratory kernel-vector comparisons and the final value obtained from the exact rational certificate at
$m=32$ and $L=4$.}
\label{tab:R-scan}
\end{table}

The first eight entries in Table~\ref{tab:R-scan} are floating-point values found within the stated search classes, not certified global optima.  In particular, the $R=1$ row is numerical evidence rather than a lower bound for all $R=1$ kernel--boundary pairs.  The last row has a different logical status: its value is recomputed from the exact rational certificate used in Theorem~\ref{thm:main}.

\subsection{Rationalization and feasibility repair}
\label{subsec:rationalization}

Rounding the boundary vectors in a feasible kernel--boundary system can create small violations of the finite covering inequalities.  The following repair is useful independently of the present kernel--boundary system.  For rational rounded data, set
\[
M(q):=\sum_{r=1}^R\lambda_r\sum_{i=0}^{m-1}
p_i^{(r)}w_{q+i}^{(r)}
\qquad(0\le q\le n),
\]
with $w_j^{(r)}=1$ for $j\ge n$, and put
\[
\rho_q:=c_q=\sum_{r=1}^R\lambda_r
\sum_{\substack{0\le i<m\\q+i<n}}p_i^{(r)}
\qquad(0\le q<n).
\]

\begin{lemma}\label{lem:rational-repair}
\grammarrev{If the mixing weights and kernel vectors are nonnegative rational data with the required normalizations and every entry $w_j^{(r)}$, $0\le j<n$, is rational, then the kernel--boundary system obtained by replacing each $w^{(r)}$ with the boundary vector $\widetilde w^{(r)}$ defined by}
\[
\widetilde w_j^{(r)}=
\begin{cases}
w_j^{(r)}+\eta,&0\le j<n,\\
1,&j\ge n
\end{cases}
\]
is feasible whenever $\eta$ is rational and
\[
\eta\ge
\max_{\substack{0\le q<n\\ \rho_q>0}}
\frac{\max\{0,1-M(q)\}}{\rho_q}.
\]
\end{lemma}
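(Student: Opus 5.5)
The plan is to compute the new covering sums $\widetilde M(q)$ directly in terms of $M(q)$ and $\rho_q$, and then check the finite covering inequalities \eqref{eq:covering} index by index. Shifting every boundary entry by $\eta$ only changes the terms with $q+i<n$. Fix $0\le q\le n$ and split the inner sum according as $q+i<n$ or $q+i\ge n$, as in the derivation of $Bw\ge c$. In the second range the tail convention gives $\widetilde w^{(r)}_{q+i}=w^{(r)}_{q+i}=1$. In the first range each entry increases by exactly $\eta$. Hence
\[
\widetilde M(q)=M(q)+\eta\sum_{r=1}^R\lambda_r\sum_{\substack{0\le i<m\\ q+i<n}}p_i^{(r)}=M(q)+\eta\rho_q\qquad(0\le q<n),
\]
and $\widetilde M(n)=M(n)=\sum_r\lambda_r\sum_i p_i^{(r)}=1$ by normalization. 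So the inequality with index $q=n$ holds automatically, as already observed after \eqref{EQ:c=Atimes1}.

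For $0\le q<n$ I would split into cases according to the sign of $\rho_q$. Since all $\lambda_r$ and $p_i^{(r)}$ are nonnegative, $\rho_q\ge0$.

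\emph{Case $\rho_q>0$.} The hypothesis gives $\eta\rho_q\ge\max\{0,1-M(q)\}\ge 1-M(q)$, and therefore $\widetilde M(q)\ge1$.

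\emph{Case $\rho_q=0$.} Every term $\lambda_rp_i^{(r)}$ with $q+i<n$ vanishes, so
\[
M(q)=\sum_r\lambda_r\sum_{i:\,q+i\ge n}p_i^{(r)}=\sum_r\lambda_r\sum_i p_i^{(r)}=1.
\]
Here the middle equality holds because the omitted terms with $q+i<n$ are all zero. Thus $\widetilde M(q)=M(q)=1$, regardless of $\eta$.

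Two small points remain. First, the maximum over an empty index set should be read as $0$, so any rational $\eta$ (in particular any $\eta\ge0$) is allowed in that degenerate situation, and the argument above still applies. Second, rationality of $\eta$ and of the data is needed only so that the repaired system stays within exact rational arithmetic for the certification step; it plays no role in the feasibility argument itself. The modified system still consists of kernel vectors, boundary vectors, and mixing weights of the required form, since only boundary entries with $j<n$ were changed.

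I expect no real obstacle. The only delicate step is the $\rho_q=0$ case, where one must notice that $M(q)=1$ exactly, so that no division by $\rho_q$ is needed.
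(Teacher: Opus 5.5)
Your proof is correct and follows essentially the same argument as the paper: the identity $\widetilde M(q)=M(q)+\eta\rho_q$, the observation that $\rho_q=0$ forces $M(q)=1$, and the automatic equality at $q=n$. The only differences are minor: the paper notes that $\rho_0=1$ (because $n=Lm\ge m$), so your empty-maximum convention is never needed, and it also observes that the maximum is rational, so an admissible rational $\eta$ always exists.
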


\begin{proof}
For $0\le q<n$, let $M_\eta(q)$ denote the covering value after this replacement. Direct substitution gives
\[
M_\eta(q)=M(q)+\eta\rho_q.
\]
If $\rho_q=0$, nonnegativity implies that every kernel vector with $\lambda_r>0$ has zero mass on the indices with $q+i<n$. All of its mass then lies in the fixed tail, so $M(q)=1$.  Also $\rho_0=1$, and the finite covering inequality with $q=n$ remains equality because it contains only fixed-tail entries.  The displayed choice of $\eta$ therefore repairs all finite covering inequalities simultaneously.
\grammarrev{Under the stated rationality hypotheses, every $M(q)$ and $\rho_q$ is rational; since $\rho_0=1$, the maximum is nonempty and rational.
Thus a rational $\eta$ satisfying the displayed inequality always exists.}
\end{proof}

To obtain the final kernel--boundary system, we rounded the mixing weights and symmetric kernel vectors while preserving the mixing-weight normalization and the kernel-vector normalization, symmetry, and nonnegativity, re-solved the boundary quadratic program, \grammarrev{rounded every boundary entry to a rational number}, and applied Lemma~\ref{lem:rational-repair}. After this repair, \grammarrev{we reverified the normalization and nonnegativity of the mixing weights; the normalization, symmetry, and nonnegativity of the kernel vectors; and all finite covering inequalities. We also recomputed the exact values of $a$ and $b$ and performed the final comparison using rational arithmetic alone.}
The numerical optimizer is therefore a discovery tool only.
\section{Exact rational certificate and completion of the proof}\label{sec:certificate}

This section is logically independent of the floating-point values in
Table~\ref{tab:R-scan}.  The proof of Theorem~\ref{thm:main} uses only
Theorem~\ref{thm:kernel-transfer} and the exact assertions in
Lemma~\ref{cla:certificate} below.

We take
\[
R=8,\qquad m=32,\qquad L=4,
\]
and
\begin{equation}\label{eq:eight-lambdas}
\begin{aligned}
(\lambda_1,\ldots,\lambda_8)=10^{-8}(&39490874,8624912,135342,12911860,\\
&20451562,2832639,9217142,6335669).
\end{aligned}
\end{equation}
The indexing follows the continuation search: the first kernel vector is the original free symmetric kernel vector, while the uniform kernel vector was added as the second kernel vector at the $R=2$ stage. The remaining exact rational kernel vectors and all boundary vectors are provided in the machine-readable certificate accompanying the paper. 
The common denominator of the kernel-vector entries is $10^{12}$. Before the common repair, the rounded boundary entries have denominator $10^9$, and the common repair increment is $\eta=17/250000000000$.

The verifier checks all $129$ finite covering inequalities. The final finite covering inequality, with $q=128$, holds with equality, while the least positive slack occurs at $q=127$ and is
\[
\frac{4735171805469436153}
{6250000000000000000000000000000}>0.
\]
\begin{lemma}\label{cla:certificate}
The rational data in the ancillary certificate satisfy all $129$ finite covering
inequalities \eqref{eq:covering}. Moreover,
\begin{equation}\label{eq:exact-a}
a=\frac{497329054138522113993707809619}
{390625000000000000000000000000},
\end{equation}
\begin{equation}\label{eq:exact-b}
b=\frac{69918675237166718360455326217}
{100000000000000000000000000000},
\end{equation}
and
\begin{equation}\label{eq:constant-check}
ab<\left(\frac{9435}{10000}\right)^2.
\end{equation}
\end{lemma}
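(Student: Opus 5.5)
This is a finite statement about explicit rational data, so I will prove it by exact verification in rational arithmetic, organized so that each check reduces to integer arithmetic. First I read the ancillary certificate and confirm the structural hypotheses of Theorem~\ref{thm:kernel-transfer}. The weights in \eqref{eq:eight-lambdas} are nonnegative and their numerators sum to $10^8$. Each kernel vector $p^{(r)}$ has numerators over the common denominator $10^{12}$ that are nonnegative, sum to $10^{12}$, and satisfy $p_i^{(r)}=p_{31-i}^{(r)}$. Each boundary vector $w^{(r)}$ has $n=Lm=128$ rational entries, namely the rounded values over $10^9$ plus $\eta=17/250000000000$. I clear denominators: the least common multiple of $10^8$, $10^{12}$ and the boundary denominators is a power of $2$ times a power of $5$. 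After scaling, every quantity below becomes an integer identity or inequality.

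Next I check the $129$ covering inequalities \eqref{eq:covering}. For each $0\le q\le 128$ I compute
\[
M(q)=\sum_r\lambda_r\sum_i p_i^{(r)}w_{q+i}^{(r)},
\]
with $w_j^{(r)}=1$ for $j\ge128$, and verify $M(q)\ge1$. The case $q=128$ is the equality $M(128)=\sum_r\lambda_r=1$. As a consistency check, the least slack should be the stated value at $q=127$. Lemma~\ref{lem:rational-repair} explains why these inequalities should hold after the shift by $\eta$. Still, because the rounding residuals $1-M(q)$ before the repair are only known numerically, I will verify every inequality directly rather than rely on that lemma.

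I then compute $a$ and $b$ exactly from \eqref{eq:a} and \eqref{eq:b}:
\[
a=32\sum_r\lambda_r\sum_i\bigl(p_i^{(r)}\bigr)^2,\qquad
b=1+2\Bigl(\tfrac1{32}\sum_r\lambda_r\sum_{j<128}\bigl(w_j^{(r)}\bigr)^2-4\Bigr).
\]
I reduce each to lowest terms and compare with \eqref{eq:exact-a} and \eqref{eq:exact-b}. The denominators should come out as $390625\cdot10^{24}=5^8\cdot10^{24}$ and $10^{29}$, which matches $(10^{12})^2$, $10^8$ and the factor $32$ after cancellation.

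Finally, \eqref{eq:constant-check} is the single integer inequality
\[
497329054138522113993707809619\cdot 69918675237166718360455326217\cdot 10^8 < 9435^2\cdot 390625\cdot 10^{24}\cdot 10^{29},
\]
which I check by big-integer multiplication. The decimal approximations $a\approx1.27316$ and $b\approx0.69919$ give $ab\approx0.89018<0.89019\approx0.9435^2$, a margin of about $10^{-5}$, so the exact comparison should succeed. The main obstacle is not conceptual. It is reliably carrying out the covering checks and the energy sums with exact rationals and no floating point, since the slack at $q=127$ is only about $7.6\times10^{-13}$. I would implement the checks with integer numerators over a fixed common denominator, so that every inequality is a comparison of integers.
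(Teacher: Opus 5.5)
Your proposal is correct and follows essentially the same route as the paper. The paper's proof also rests on exact rational-arithmetic verification, delegated to the hashed verifier with embedded data, of the normalizations, symmetry, nonnegativity, all $129$ covering inequalities, the exact values of $a$ and $b$, and the final comparison, which reduces to the integer inequality you wrote down.
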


\begin{proof}
\grammarrev{The verifier used in this proof is \nolinkurl{sidon_certificate_8kernel.py}. A frozen copy of the exact file identified by the SHA--256 hash below is supplied with the ancillary files of this version.}
The accompanying verifier checks the normalization and nonnegativity of the mixing weights, the normalization, symmetry, and nonnegativity of the kernel vectors, all finite covering inequalities, the exact values of $a$ and $b$, and \eqref{eq:constant-check}. It also confirms the least positive slack displayed above. The certificate data are embedded in the verifier. The SHA--256 hash of \nolinkurl{sidon_certificate_8kernel.py} is
\begin{center}
\small\texttt{957a5afadd849ac4f97c2b71252abb5c796c2db3c91a608ab35097e3c49292a8}.
\end{center}
This completes the exact verification.
\end{proof}

\begin{proof}[Proof of Theorem~\ref{thm:main}]
Lemma~\ref{cla:certificate} supplies a feasible kernel--boundary system with $b>0$. 
Applying Theorem~\ref{thm:kernel-transfer} to this system and setting $\gamma_0=\sqrt{ab}$ gives
\[
F(N)\le N^{1/2}+\gamma_0N^{1/4}+O(1).
\]
Evaluating the exact product from \eqref{eq:exact-a} and \eqref{eq:exact-b} gives
\begin{equation*}
    \gamma_0=0.9434925907135450\ldots. \qedhere
\end{equation*}
\end{proof}

\section{Concluding remarks}

We give \grammarrev{an improved upper bound on $F(N)$} using an exact eight-pair rational certificate.  More generally, Theorem~\ref{thm:kernel-transfer}  separates the reusable mathematical mechanism from this particular certificate.  Any new feasible kernel--boundary system immediately gives a valid coefficient, without repeating the Sidon-set argument.  Proposition~\ref{prop:scalar-bottleneck} also isolates the source of improvement: averaging scalar-feasible kernel--boundary pairs cannot improve the best constituent coefficient, whereas joint feasibility permits complementary boundary deficits.

The finite framework can be reused with more kernel vectors, other parametrized families of kernel vectors, or different values of $m$ and $L$.  Its two requirements are an autocorrelation estimate controlling the weighted energy and the finite covering inequalities.  This suggests analogous programs for bounded-multiplicity difference problems or for constructions with suitable cross-correlation positivity, although such extensions would require separate analytic input.

Problem~\ref{Erdos-Turan-problem} remains the classical open problem in additive number theory.
Further optimization may reduce the coefficient of $N^{1/4}$, but the present method does not explain how to eliminate this term. Proving $F(N)\le N^{1/2}+O(1)$ will likely require a genuinely new structural idea.

\section*{Data and code availability}

The exact rational certificate and the exploratory search code are distributed separately.  The exact rational data and standard-library verifier are contained in \nolinkurl{sidon_certificate_8kernel.py}; the exploratory search code is \nolinkurl{sidon_numerical_search.py}.  Both files accompany the submission and are mirrored in the versioned repository
\[
\texttt{https://github.com/HbZhao1/sidon-vector-smoothing/tree/main}.
\]
The certificate file is identified by the SHA--256 hash in Lemma~\ref{cla:certificate}, so the checked artifact is unambiguous even if the repository later changes.  The verifier requires Python~3.9 or later and uses only the Python standard library. All verification checks use exact rational arithmetic; floating-point arithmetic is used only to print the decimal approximation of $\sqrt{ab}$.
Running
\[
\texttt{python3 sidon\_certificate\_8kernel.py}
\]
reproduces the exact checks asserted in Lemma~\ref{cla:certificate}.  

\section*{Declaration on the use of AI}

The proof strategy was developed by the authors, \grammarrev{inspired by the work of Carter, Hunter, and O'Bryant}~\cite{CHO}.  The authors used generative AI tools to help check and simplify routine technical computations. All \grammarrev{AI-assisted} suggestions and computations were independently checked and
verified by the authors.

\bibliographystyle{abbrv}
\bibliography{sidon}

\end{document}